\newtheorem{theorem}{Theorem}
\newtheorem{corollary}[theorem]{Corollary}
\newtheorem{definition}[theorem]{Definition}
\newtheorem{example}[theorem]{Example}
\newtheorem{proposition}[theorem]{Proposition}
\newtheorem{remark}[theorem]{Remark}
\newenvironment{proof}[1][Proof]{\noindent\textbf{#1.} }{\ \rule{0.5em}{0.5em}}
\numberwithin{theorem}{section}
\numberwithin{equation}{section}
\begin{document}

\title{Jet Geometrical Objects Depending on a Relativistic Time}
\author{Mircea Neagu \\
%EndAName
{\small January 2008; Revised July 2009 (important additional texts)}}
\date{}
\maketitle

\begin{abstract}
In this paper we study a collection of jet geometrical concepts, we refer to
d-tensors, relativistic time dependent semisprays, harmonic curves and
nonlinear connections on the 1-jet space $J^{1}(\mathbb{R},M)$, necessary to
the construction of a Miron's-like geometrization for Lagrangians depending
on a relativistic time. The geometrical relations between these jet
geometrical objects are exposed.
\end{abstract}

\textbf{Mathematics Subject Classification (2000):} 53C60, 53C43, 53C07.

\textbf{Key words and phrases:} 1-jet space $J^{1}(\mathbb{R},M)$,
d-tensors, relativistic time dependent semisprays, harmonic curves,
nonlinear connections.

\section{Some physical and geometrical aspects}

\hspace{4mm}On the one hand, according to Olver's opinion [10], it is well
known that the 1-jet fibre bundle is a basic object in the study of
classical and quantum field theories. For such a reason, a lot of authors
(Asanov [2], Saunders [11], Vondra [12] and many others) studied the
differential geometry of the 1-jet spaces. Going on with the geometrical
studies of Asanov [2] and using as a pattern the Lagrangian geometrical
ideas developed by Miron, Anastasiei or Buc\u{a}taru in the monographs [6]
and [3], the author of this paper has developed the \textit{Riemann-Lagrange
geometry of 1-jet spaces} [7] which is very suitable for the geometrical
study of the \textit{relativistic non-autonomous (rheonomic) Lagrangians},
that is of Lagrangians depending on an usual \textit{"relativistic time"}
[8] or on a \textit{"relativistic multi-time"} [7], [9].

On the other hand, it is important to note that a \textit{classical
non-autonomous (rheonomic) Lagrangian geometry} (i. e. a geometrization of
the Lagrangians depending on an usual \textit{"absolute time"}) was sketched
by Miron and Anastasiei at the end of the book [6] and developed in the same
way by Anastasiei and Kawaguchi [1] or Frigioiu [5].

In what follows we try to expose the main geometrical and physical aspects
which differentiate the both geometrical theories: the \textit{jet
relativistic non-autonomous Lagrangian geometry} [8] and the \textit{%
classical non-a\-u\-to\-no\-mous Lagrangian geometry} [6].

In this direction, we point out that the \textit{relativistic non-autonomous
Lagrangian geometry} [8] has as natural house the 1-jet space $J^{1}(\mathbb{%
R},M)$, where $\mathbb{R}$ is the manifold of real numbers having the
coordinate $t$. This represents the usual \textit{relativistic time}. We
recall that the 1-jet space $J^{1}(\mathbb{R},M)$ is regarded as a vector
bundle over the product manifold $\mathbb{R}\times M$, having the fibre type 
$\mathbb{R}^{n}$, where $n$ is the dimension of the \textit{spatial}
manifold $M$. In mechanical terms, if the manifold $M$ has the \textit{%
spatial} local coordinates $(x^{i})_{i=\overline{1,n}}$, then the 1-jet
vector bundle%
\begin{equation}
J^{1}(\mathbb{R},M)\rightarrow \mathbb{R}\times M  \label{jfb}
\end{equation}%
can be regarded as a \textit{bundle of configurations} having the local
coordinates $(t,x^{i},y_{1}^{i})$; these transform by the rules [8]%
\begin{equation}
\left\{ 
\begin{array}{l}
\widetilde{t}=\widetilde{t}(t)\medskip \\ 
\widetilde{x}^{i}=\widetilde{x}^{i}(x^{j})\medskip \\ 
\widetilde{y}_{1}^{i}=\dfrac{\partial \widetilde{x}^{i}}{\partial x^{j}}%
\dfrac{dt}{d\widetilde{t}}\cdot y_{1}^{j}.%
\end{array}%
\right.  \label{rgg}
\end{equation}

\begin{remark}
The form of the jet transformation group (\ref{rgg}) stands out by the 
\textbf{relativistic }\textit{character} of the \textbf{time} $t$.
\end{remark}

Comparatively, in the \textit{classical non-a\-u\-to\-no\-mous Lagrangian
geometry} the \textit{bundle of configurations }is the vector bundle [6]%
\begin{equation}
\mathbb{R}\times TM\rightarrow M,  \label{afb}
\end{equation}%
whose local coordinates $(t,x^{i},y^{i})$ transform by the rules 
\begin{equation}
\left\{ 
\begin{array}{l}
\widetilde{t}=t\medskip \\ 
\widetilde{x}^{i}=\widetilde{x}^{i}(x^{j})\medskip \\ 
\widetilde{y}^{i}=\dfrac{\partial \widetilde{x}^{i}}{\partial x^{j}}\cdot
y^{j},%
\end{array}%
\right.  \label{agg}
\end{equation}%
where $TM$ is the tangent bundle of the \textit{spatial} manifold $M$.

\begin{remark}
The form of the transformation group (\ref{agg}) stands out by the \textbf{%
absolute }\textit{character} of the \textbf{time} $t$.
\end{remark}

It is important to note that jet transformation group (\ref{rgg}) from the 
\textit{relativistic non-autonomous Lagrangian geometry} is more general and
more natural than the transformation group (\ref{agg}) used in the \textit{%
classical non-autonomous Lagrangian geometry}. This is because the last one
ignores the temporal reparametrizations, emphasizing in this way the
absolute character of the usual time coordinate $t$. Or, physically
speaking, the relativity of time is an well-known fact.

From a geometrical point of view, we point out that the entire \textit{%
classical rheonomic Lagrangian geometry} of Miron and Anastasiei [6] relies
on the study of the \textit{energy action functional}%
\begin{equation*}
\mathbb{E}_{1}(c)=\int_{a}^{b}L(t,x^{i},y^{i})dt,
\end{equation*}%
where $L:\mathbb{R}\times TM\rightarrow \mathbb{R}$ is a Lagrangian function
and $y^{i}=dx^{i}/dt,$ whose Euler-Lagrange equations%
\begin{equation*}
\ddot{x}^{i}+2G^{i}(t,x^{i},y^{i})=0
\end{equation*}%
produce the semispray $G^{i}$ and the nonlinear connection $%
N_{j}^{i}=\partial G^{i}/\partial y^{j}.$ In the sequel, the authors
construct the adapted bases of vector and covector fields, together with the
adapted components of the $N$-linear connections and their corresponding
torsions and curvatures. But, because $L(t,x^{i},y^{i})$ is a real function,
we deduce that the previous geometrical theory has the following impediment:

$-$the energy action functional depends on the reparametrizations $%
t\longleftrightarrow \widetilde{t}$ of the same curve $c.$

For example, in order to avoid this inconvenience, the Finsler case imposes
the 1-positive homogeneity condition%
\begin{equation*}
L(t,x^{i},\lambda y^{i})=\lambda L(t,x^{i},y^{i}),\text{ }\forall \text{ }%
\lambda >0.
\end{equation*}

Alternatively, the \textit{relativistic rheonomic Lagrangian geometry} from
[8] uses the \textit{relativistic energy action functional}%
\begin{equation*}
\mathbb{E}_{2}(c)=\int_{a}^{b}L(t,x^{i},y_{1}^{i})\sqrt{h_{11}(t)}dt,
\end{equation*}%
where $L:J^{1}(\mathbb{R},M)\rightarrow \mathbb{R}$ is a jet Lagrangian
function and $h_{11}(t)$ is a Riemannian metric on the time manifold $%
\mathbb{R}$. This functional is now independent by the reparametrizations $%
t\longleftrightarrow \widetilde{t}$ of the same curve $c$ and the
corresponding Euler-Lagrange equations take the form%
\begin{equation*}
\ddot{x}^{i}+2H_{(1)1}^{(i)}\left( t,x^{k},y_{1}^{k}\right)
+2G_{(1)1}^{(i)}\left( t,x^{k},y_{1}^{k}\right) =0,
\end{equation*}%
where the coefficients $H_{(1)1}^{(i)}$, respectively $G_{(1)1}^{(i)}$,
represent a temporal, respectively spatial, semispray.

In this respect, the author of this paper believes that the relativistic
geometrical approach proposed in this paper has more geometrical and
physical meanings than the theory proposed by Miron and Anastasiei in [6].
As a conclusion, in order to remark the main similitudes and differences, we
invite the reader to compare both the \textit{classical }and \textit{%
relativistic non-autonomous Lagrangian geometries} exposed in the works [6]
and [8].

As a final remark, we point out that for a lot of mathematicians (such as
Crampin, de Leon, Krupkova, Sarlet, Saunders and others) the non-autonomous
Lagrangian geometry is constructed on the first jet bundle $J^{1}\pi $ of a
fibered manifold $\pi :M^{n+1}\longrightarrow \mathbb{R}$. In their papers,
if $(t,x^{i})$ are the local coordinates on the $n+1$-dimensional manifold $%
M $ such that $t$ is a global coordinate for the fibers of the submersion $%
\pi $ and $x^{i}$ are transverse coordinates of the induced foliation, then
a change of coordinates on $M$ is given by%
\begin{equation}
\left\{ 
\begin{array}{ll}
\widetilde{t}=\widetilde{t}(t),\medskip & \dfrac{d\widetilde{t}}{dt}\neq 0
\\ 
\widetilde{x}^{i}=\widetilde{x}^{i}(x^{j},t), & \text{rank}\left( \dfrac{%
\partial \widetilde{x}^{i}}{\partial x^{j}}\right) =n.%
\end{array}%
\right.  \label{Krupkova}
\end{equation}

Altough the 1-jet extension of the transformation rules (\ref{Krupkova}) is
more general than the transformation group (\ref{rgg}), the author ot this
paper consider that the transformation group (\ref{rgg}) is more appropriate
for its final purpose, the development of a \textit{relativistic rheonomic
Lagrangian field theory}. For example, in the paper [8], starting with a
non-degenerate Lagrangian function $L:J^{1}(\mathbb{R},M)\rightarrow \mathbb{%
R}$, one introduces a relativistic time dependent gravitational potential%
\begin{equation}
G=h_{11}dt\otimes dt+g_{ij}dx^{i}\otimes dx^{j}+h^{11}(t)g_{ij}(t,x,y)\delta
y_{1}^{i}\otimes \delta y_{1}^{j},  \label{grav-pot}
\end{equation}%
where%
\begin{equation*}
g_{ij}(t,x,y)=\frac{h_{11}(t)}{2}\frac{\partial ^{2}L}{\partial
y_{1}^{i}\partial y_{1}^{j}}\text{ and }\delta
y_{1}^{i}=dy_{1}^{i}+M_{(1)1}^{(i)}dt+N_{(1)j}^{(i)}dx^{j},
\end{equation*}%
$h_{11}(t)$ beeing an \textit{a priori} given Riemannian metric on the time
manifold $\mathbb{R}$ and the set of local functions%
\begin{equation*}
\Gamma =\left( M_{(1)1}^{(i)},N_{(1)j}^{(i)}\right)
\end{equation*}%
beeing a nonlinear connection produced by the Lagrangian $\mathcal{L}=L\sqrt{%
h_{11}(t)}$. The above non-autonomous gravitational potential $G$ is a 
\textit{global geometrical object} on $J^{1}(\mathbb{R},M)$ and is
characterized by some natural Einstein equations [8], as in the Miron and
Anastasiei case [6].

\section{d-Tensors on the 1-jet space $J^{1}(\mathbb{R},M)$}

\hspace{5mm}It is well-known the fact that in the study of the geometry of a
fibre bundle an important role is played by the tensors. For that reason,
let us consider%
\begin{equation*}
\left\{ \frac{\partial }{\partial t},\frac{\partial }{\partial x^{i}},\frac{%
\partial }{\partial y_{1}^{i}}\right\} \subset \mathcal{X}(J^{1}(\mathbb{R}%
,M)),
\end{equation*}%
the canonical basis of vector fields on the 1-jet space $J^{1}(\mathbb{R},M)$%
, together with its dual basis of 1-forms%
\begin{equation*}
\left\{ dt,dx^{i},dy_{1}^{i}\right\} \subset \mathcal{X}^{\ast }(J^{1}(%
\mathbb{R},M)).
\end{equation*}

In this context, let us remark that, doing a transformation of jet local
coordinates (\ref{rgg}), the following transformation rules hold good:%
\begin{equation}
\left\{ 
\begin{array}{l}
\dfrac{\partial }{\partial t}=\dfrac{d\widetilde{t}}{dt}\dfrac{\partial }{%
\partial \widetilde{t}}+\dfrac{\partial \widetilde{y}_{1}^{j}}{\partial t}%
\dfrac{\partial }{\partial \widetilde{y}_{1}^{j}}\medskip \\ 
\dfrac{\partial }{\partial x^{i}}=\dfrac{\partial \widetilde{x}^{j}}{%
\partial x^{i}}\dfrac{\partial }{\partial \widetilde{x}^{j}}+\dfrac{\partial 
\widetilde{y}_{1}^{j}}{\partial x^{i}}\dfrac{\partial }{\partial \widetilde{y%
}_{1}^{j}}\medskip \\ 
\dfrac{\partial }{\partial y_{1}^{i}}=\dfrac{\partial \widetilde{x}^{j}}{%
\partial x^{i}}\dfrac{dt}{d\widetilde{t}}\dfrac{\partial }{\partial 
\widetilde{y}_{1}^{j}}%
\end{array}%
\right.  \label{v_transf_rules}
\end{equation}%
and%
\begin{equation}
\left\{ 
\begin{array}{l}
dt=\dfrac{dt}{d\widetilde{t}}d\widetilde{t}\medskip \\ 
dx^{i}=\dfrac{\partial x^{i}}{\partial \widetilde{x}^{j}}d\widetilde{x}%
^{j}\medskip \\ 
dy_{1}^{i}=\dfrac{\partial y_{1}^{i}}{\partial \widetilde{t}}d\widetilde{t}+%
\dfrac{\partial y_{1}^{i}}{\partial \widetilde{x}^{j}}d\widetilde{x}^{j}+%
\dfrac{\partial x^{i}}{\partial \widetilde{x}^{j}}\dfrac{d\widetilde{t}}{dt}d%
\widetilde{y}_{1}^{j}.%
\end{array}%
\right.  \label{cv_transf_rules}
\end{equation}

Taking into account that the transformation rules (\ref{v_transf_rules}) and
(\ref{cv_transf_rules}) lead to complicated transformation rules for the
components of classical tensors on the 1-jet space $J^{1}(\mathbb{R},M)$, we
consider that in the geometrical study of the 1-jet fibre bundle $J^{1}(%
\mathbb{R},M)$ a central role is played by the \textit{distinguished tensors}
(d-tensors).

\begin{definition}
A geometrical object $D=\left( D_{1k(1)(l)...}^{1i(j)(1)...}\right) $ on the
1-jet vector bundle $J^{1}(\mathbb{R},M)$, whose local components transform
by the rules%
\begin{equation}
D_{1k(1)(l)...}^{1i(j)(1)...}=\widetilde{D}_{1r(1)(s)...}^{1p(m)(1)...}\frac{%
dt}{d\widetilde{t}}\frac{\partial x^{i}}{\partial \widetilde{x}^{p}}\left( 
\frac{\partial x^{j}}{\partial \widetilde{x}^{m}}\frac{d\widetilde{t}}{dt}%
\right) \frac{d\widetilde{t}}{dt}\frac{\partial \widetilde{x}^{r}}{\partial
x^{k}}\left( \frac{\partial \widetilde{x}^{s}}{\partial x^{l}}\frac{dt}{d%
\widetilde{t}}\right) ...,  \label{tr-rules-d-tensors}
\end{equation}%
is called a \textbf{d-tensor field}.
\end{definition}

\begin{remark}
The utilization of parentheses for certain indices of the local components $%
D_{1k(1)(l)...}^{1i(j)(1)...}$ of the distinguished tensor $D$ on $J^{1}(%
\mathbb{R},M)$ will be rigorously motivated after the introduction of the
geometrical concept of \textbf{nonlinear connection }on the 1-jet space $%
J^{1}(\mathbb{R},M)$. For the moment, we point out that the pair of indices $%
"$ $_{(1)}^{(j)}$ $"$ or $"$ $_{(l)}^{(1)}$ $"$ behaves like a single index.
\end{remark}

\begin{remark}
From a physical point of view, a d-tensor field $D$ on the 1-jet vector
bundle $J^{1}(\mathbb{R},M)\rightarrow \mathbb{R}\times M$ can be regarded
as a physical object defined on the\textbf{\ event space} $\mathbb{R}\times
M $, which is dependent by the \textbf{direction} or the \textbf{%
relativistic velocity} $y=(y_{1}^{i})$. A such perspective is intimately
connected with the physical concept of \textbf{anisotropy}.
\end{remark}

\begin{example}
\label{fund-met-d-t} Let us consider a \textbf{relativistic time dependent
Lagrangian function} $L:J^{1}(\mathbb{R},M)\rightarrow \mathbb{R},$ where%
\begin{equation*}
J^{1}(\mathbb{R},M)\ni (t,x^{i},y_{1}^{i})\rightarrow
L(t,x^{i},y_{1}^{i})\in \mathbb{R}.
\end{equation*}%
Then, the geometrical object $\mathbf{G}=\left( G_{(i)(j)}^{(1)(1)}\right) $%
, where%
\begin{equation*}
G_{(i)(j)}^{(1)(1)}=\frac{1}{2}\frac{\partial ^{2}L}{\partial
y_{1}^{i}\partial y_{1}^{j}},
\end{equation*}%
is a d-tensor field on $J^{1}(\mathbb{R},M)$, which is called the \textbf{%
fundamental metrical d-tensor} produced by the jet Lagrangian function $L$.
Note that the d-tensor field%
\begin{equation*}
G_{(i)(j)}^{(1)(1)}(t,x^{i},y_{1}^{i})
\end{equation*}%
is a natural generalization for the metrical d-tensor field $%
g_{ij}(t,x^{i},y^{i})$ of a classical rheonomic Lagrange space [6]%
\begin{equation*}
RL^{n}=(M,L(t,x^{i},y^{i})),
\end{equation*}%
where $L:\mathbb{R}\times TM\rightarrow \mathbb{R}$ is an absolute time
dependent Lagrangian function.
\end{example}

\begin{example}
\label{Liouville} The geometrical object $\mathbf{C}=\left( \mathbf{C}%
_{(1)}^{(i)}\right) $, where $\mathbf{C}_{(1)}^{(i)}=y_{1}^{i}$, represents
a d-tensor field on the 1-jet space $J^{1}(\mathbb{R},M)$. This is called
the \textbf{canonical Liouville d-tensor field} of the 1-jet vector bundle $%
J^{1}(\mathbb{R},M)$. Remark that the d-tensor field $\mathbf{C}$ naturally
generalizes the Liouville vector field [6]%
\begin{equation*}
\mathbb{C}=y^{i}\frac{\partial }{\partial y^{i}},
\end{equation*}%
used in the Lagrangian geometry of the tangent bundle $TM$.
\end{example}

\begin{example}
\label{normal} Let $h=(h_{11}(t))$ be a Riemannian metric on the
relativistic time axis $\mathbb{R}$ and let us consider the geometrical
object $\mathbf{J}_{h}=\left( J_{(1)1j}^{(i)}\right) $, where%
\begin{equation*}
J_{(1)1j}^{(i)}=h_{11}\delta _{j}^{i}.
\end{equation*}%
Then, the geometrical object $\mathbf{J}_{h}$ is a d-tensor field on $J^{1}(%
\mathbb{R},M)$, which is called the $h$\textbf{-normalization d-tensor field}
of the 1-jet space $J^{1}(\mathbb{R},M)$. We underline that the $h$%
-normalization d-tensor field $\mathbf{J}_{h}$ of the 1-jet space $J^{1}(%
\mathbb{R},M)$ naturally generalizes the tangent structure [6]%
\begin{equation*}
\mathbb{J}=\delta _{j}^{i}\frac{\partial }{\partial y^{i}}\otimes dx^{j}=%
\frac{\partial }{\partial y^{i}}\otimes dx^{i},
\end{equation*}%
constructed in the Lagrangian geometry of the tangent bundle $TM$.
\end{example}

\begin{example}
\label{h-Liouville} Using preceding notations, we consider the set of local
functions $\mathbf{L}_{h}=\left( L_{(1)11}^{(i)}\right) $, where%
\begin{equation*}
L_{(1)11}^{(i)}=h_{11}y_{1}^{i}.
\end{equation*}%
The geometrical object $\mathbf{L}_{h}$ is a d-tensor field on $J^{1}(%
\mathbb{R},M)$, which is called the $h$\textbf{-canonical Liouville d-tensor
field} of the 1-jet space $J^{1}(\mathbb{R},M)$.
\end{example}

\section{Relativistic time dependent semisprays. Harmonic curves}

\hspace{5mm}It is obvious that the notions of \textit{d-tensor} and
classical \textit{tensor} on the 1-jet space $J^{1}(\mathbb{R},M)$ are
distinct ones. However, we will show in the Section 4 (see Remark \ref%
{dt-ab-nlc}), after the introduction of the geometrical concept of \textit{%
nonlinear connection}, that any d-tensor is a classical tensor on $J^{1}(%
\mathbb{R},M)$. Conversely, this statement is not true. For instance, we
construct in the sequel two classical global tensors which are not d-tensors
on $J^{1}(\mathbb{R},M)$. We talk about two geometrical notions, one of 
\textit{temporal semispray} and the other one of \textit{spatial semispray }%
on $J^{1}(\mathbb{R},M)$, which allow us to introduce the geometrical
concept of \textit{relativistic time dependent semispray} on $J^{1}(\mathbb{R%
},M)$.

\begin{definition}
A global tensor $H$ on the 1-jet space $J^{1}(\mathbb{R},M)$, which is
locally expressed by 
\begin{equation}
H=dt\otimes \frac{\partial }{\partial t}-2H_{(1)1}^{(j)}dt\otimes \frac{%
\partial }{\partial y_{1}^{j}},  \label{t-s-H}
\end{equation}%
is called a \textbf{temporal semispray} on $J^{1}(\mathbb{R},M)$.
\end{definition}

Taking into account that the temporal semispray $H$ is a global classical
tensor on $J^{1}(\mathbb{R},M)$, by direct local computations, we find

\begin{proposition}
(i) The local components $H_{(1)1}^{(j)}$ of the temporal semispray $H$
transform by the rules%
\begin{equation}
2\widetilde{H}_{(1)1}^{(k)}=2H_{(1)1}^{(j)}\left( \frac{dt}{d\widetilde{t}}%
\right) ^{2}\frac{\partial \widetilde{x}^{k}}{\partial x^{j}}-\frac{dt}{d%
\widetilde{t}}\frac{\partial \widetilde{y}_{1}^{k}}{\partial t}.
\label{tr-rules-t-s}
\end{equation}

(ii) Conversely, to give a temporal semispray on $J^{1}(\mathbb{R},M)$ is
equivalent to give a set of local functions $H=\left( H_{(1)1}^{(j)}\right) $
which transform by the rules (\ref{tr-rules-t-s}).
\end{proposition}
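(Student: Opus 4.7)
The plan is to prove (i) by substituting the jet-coordinate transformation rules (\ref{v_transf_rules}) and (\ref{cv_transf_rules}) into the local expression (\ref{t-s-H}) of $H$ and then identifying coefficients of the resulting decomposition in the tilded frame. Specifically, I would start from the identity
\begin{equation*}
dt\otimes \frac{\partial }{\partial t}-2H_{(1)1}^{(j)}dt\otimes \frac{\partial }{\partial y_{1}^{j}}=d\widetilde{t}\otimes \frac{\partial }{\partial \widetilde{t}}-2\widetilde{H}_{(1)1}^{(k)}d\widetilde{t}\otimes \frac{\partial }{\partial \widetilde{y}_{1}^{k}},
\end{equation*}
which expresses the globality of the classical tensor $H$. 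Then I would rewrite the left-hand side entirely in terms of the tilded basis: $dt=(dt/d\widetilde{t})\,d\widetilde{t}$, while $\partial /\partial t$ splits into a $\partial /\partial \widetilde{t}$ part with coefficient $d\widetilde{t}/dt$ and a $\partial /\partial \widetilde{y}_{1}^{j}$ part with coefficient $\partial \widetilde{y}_{1}^{j}/\partial t$, and $\partial /\partial y_{1}^{j}$ produces only a $\partial /\partial \widetilde{y}_{1}^{k}$ contribution with coefficient $(\partial \widetilde{x}^{k}/\partial x^{j})(dt/d\widetilde{t})$.

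The $d\widetilde{t}\otimes \partial /\partial \widetilde{t}$ terms cancel automatically since $(dt/d\widetilde{t})(d\widetilde{t}/dt)=1$, so the proof reduces to matching the $d\widetilde{t}\otimes \partial /\partial \widetilde{y}_{1}^{k}$ coefficients. Collecting the two contributions one finds
\begin{equation*}
-2\widetilde{H}_{(1)1}^{(k)}=\frac{dt}{d\widetilde{t}}\frac{\partial \widetilde{y}_{1}^{k}}{\partial t}-2H_{(1)1}^{(j)}\left(\frac{dt}{d\widetilde{t}}\right)^{2}\frac{\partial \widetilde{x}^{k}}{\partial x^{j}},
\end{equation*}
which, upon rearrangement, gives exactly (\ref{tr-rules-t-s}). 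This is a routine bookkeeping step, so the main subtlety is purely notational: one must take care that the non-tensorial term $\partial \widetilde{y}_{1}^{k}/\partial t$, produced by the vertical correction in $\partial /\partial t$, survives and produces the affine (inhomogeneous) piece distinguishing $H_{(1)1}^{(j)}$ from a genuine d-tensor.

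For part (ii), the converse, I would reverse the reasoning: given an atlas-indexed family of local functions $H_{(1)1}^{(j)}$ satisfying the cocycle (\ref{tr-rules-t-s}) on every overlap, I would define, chart by chart, the local tensor field (\ref{t-s-H}) and check that the two expressions coincide on overlaps. This coincidence is exactly what the computation of part (i) establishes, read backwards: inserting (\ref{tr-rules-t-s}) into the tilded local expression and using (\ref{v_transf_rules})--(\ref{cv_transf_rules}) reproduces the untilded one. Hence the locally defined tensors glue to a globally well-defined classical tensor on $J^{1}(\mathbb{R},M)$ of the required form, proving the equivalence.

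Overall, no real obstacle is expected: everything follows from a careful substitution and the cancellation $(dt/d\widetilde{t})(d\widetilde{t}/dt)=1$. The only point worth emphasizing is that the inhomogeneous term $-(dt/d\widetilde{t})(\partial \widetilde{y}_{1}^{k}/\partial t)$ in (\ref{tr-rules-t-s}) is precisely the obstruction to $H_{(1)1}^{(j)}$ being a d-tensor in the sense of (\ref{tr-rules-d-tensors}), which is consistent with the preceding discussion stating that temporal semisprays are global classical tensors but not d-tensors.
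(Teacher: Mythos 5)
Your computation is correct and is exactly the "direct local computation" the paper invokes without writing out: substitute (\ref{v_transf_rules}) and (\ref{cv_transf_rules}) into (\ref{t-s-H}), match the $d\widetilde{t}\otimes \partial /\partial \widetilde{y}_{1}^{k}$ coefficients, and reverse the argument for the converse. No discrepancy with the paper's approach.
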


\begin{example}
Let us consider $h=(h_{11}(t))$ a Riemannian metric on the temporal manifold 
$\mathbb{R}$ and let 
\begin{equation*}
H_{11}^{1}=\frac{h^{11}}{2}\frac{dh_{11}}{dt},
\end{equation*}%
where $h^{11}=1/h_{11}$, be its Christoffel symbol. Taking into account that
we have the transformation rule%
\begin{equation}
\widetilde{H}_{11}^{1}=H_{11}^{1}\frac{dt}{d\widetilde{t}}+\frac{d\widetilde{%
t}}{dt}\frac{d^{2}t}{d\widetilde{t}^{2}},  \label{t-Cris-symb}
\end{equation}%
we deduce that the local components%
\begin{equation*}
\mathring{H}_{(1)1}^{(j)}=-\frac{1}{2}H_{11}^{1}y_{1}^{j}
\end{equation*}%
define a temporal semispray $\mathring{H}=\left( \mathring{H}%
_{(1)1}^{(j)}\right) $ on $J^{1}(\mathbb{R},M)$. This is called the \textbf{%
canonical temporal semispray associated to the temporal metric} $h(t)$.
\end{example}

\begin{definition}
A global tensor $G$ on the 1-jet space $J^{1}(\mathbb{R},M)$, which is
locally expressed by 
\begin{equation}
G=y_{1}^{i}dt\otimes \frac{\partial }{\partial x^{i}}-2G_{(1)1}^{(j)}dt%
\otimes \frac{\partial }{\partial y_{1}^{j}},  \label{s-s-G}
\end{equation}%
is called a \textbf{spatial semispray} on $J^{1}(\mathbb{R},M)$.
\end{definition}

As in the case of the temporal semispray, by direct local computations, we
can prove without difficulties the following statements:

\begin{proposition}
(i) The local components $G_{(1)1}^{(j)}$ of the spatial semispray $G$
transform by the rules%
\begin{equation}
2\widetilde{G}_{(1)1}^{(k)}=2G_{(1)1}^{(j)}\left( \frac{dt}{d\widetilde{t}}%
\right) ^{2}\frac{\partial \widetilde{x}^{k}}{\partial x^{j}}-\frac{\partial
x^{i}}{\partial \widetilde{x}^{j}}\frac{\partial \widetilde{y}_{1}^{k}}{%
\partial x^{i}}\widetilde{y}_{1}^{j}.  \label{tr-rules-s-s}
\end{equation}

(ii) Conversely, to give a spatial semispray on $J^{1}(\mathbb{R},M)$ is
equivalent to give a set of local functions $G=\left( G_{(1)1}^{(j)}\right) $
which transform by the rules (\ref{tr-rules-s-s}).
\end{proposition}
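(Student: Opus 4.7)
The plan is to verify part (i) by expanding the globally defined tensor $G$ of (\ref{s-s-G}) in two overlapping charts and matching the components in the tilded frame; part (ii) then follows by reading the same computation backwards as a gluing statement. Concretely, I start from the defining expression $G=y_1^i\,dt\otimes\partial/\partial x^i-2G_{(1)1}^{(j)}\,dt\otimes\partial/\partial y_1^j$ and substitute the rules (\ref{v_transf_rules}) and (\ref{cv_transf_rules}) to rewrite $G$ in the frame $\{d\widetilde{t},\partial/\partial\widetilde{x}^k,\partial/\partial\widetilde{y}_1^k\}$. The coefficient of $d\widetilde{t}\otimes\partial/\partial\widetilde{x}^k$ collapses, via the identity $\widetilde{y}_1^k=(\partial\widetilde{x}^k/\partial x^j)(dt/d\widetilde{t})\,y_1^j$ from (\ref{rgg}), to the expected $\widetilde{y}_1^k$, so the first term of $G$ transforms tautologically. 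The nontrivial content sits in the coefficient of $d\widetilde{t}\otimes\partial/\partial\widetilde{y}_1^k$, which receives two contributions: one from the anomalous $(\partial\widetilde{y}_1^k/\partial x^i)$ piece appearing in the rule for $\partial/\partial x^i$, and one from the leading $-2G_{(1)1}^{(j)}$ together with the rule for $\partial/\partial y_1^j$. Equating this coefficient to $-2\widetilde{G}_{(1)1}^{(k)}$ and rewriting $y_1^i(dt/d\widetilde{t})=(\partial x^i/\partial\widetilde{x}^j)\,\widetilde{y}_1^j$ by inverting (\ref{rgg}) produces exactly the rule (\ref{tr-rules-s-s}).

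For part (ii), the forward implication is already contained in the calculation above. For the converse, I would take a family of local functions $G_{(1)1}^{(j)}$ obeying (\ref{tr-rules-s-s}) on each chart of a jet-adapted atlas of $J^1(\mathbb{R},M)$, define $G$ chart-wise by the local formula (\ref{s-s-G}), and verify that the two local expressions agree on overlaps. This agreement is precisely the computation of part (i) read backwards: the inhomogeneous correction in (\ref{tr-rules-s-s}) is exactly what cancels the non-tensorial $(\partial\widetilde{y}_1^k/\partial x^i)$ piece coming from the change of frame, so the local tensors patch into a globally well-defined object on $J^1(\mathbb{R},M)$.

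The main bookkeeping obstacle I expect is the second-order term $\partial\widetilde{y}_1^k/\partial x^i=(\partial^2\widetilde{x}^k/\partial x^i\partial x^j)(dt/d\widetilde{t})\,y_1^j$. It is this Hessian-type contribution, once contracted with $y_1^i$ and converted via (\ref{rgg}) into an expression in $\widetilde{y}_1^j$, that produces the anomalous correction in (\ref{tr-rules-s-s}) and explains why $G$ fails to be a d-tensor (compare with the rule (\ref{tr-rules-d-tensors})). Keeping the chain of $dt/d\widetilde{t}$ factors straight, one from $dt$ itself and another from the rule for $\partial/\partial y_1^j$, is the only genuine source of error in what is otherwise a direct substitution, formally parallel to the argument used for the temporal semispray $H$ in (\ref{tr-rules-t-s}).
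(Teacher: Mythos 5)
Your proposal is correct and is essentially the paper's own argument: the paper proves this proposition "by direct local computations," i.e., precisely by expanding the global tensor $G$ of (\ref{s-s-G}) in the tilded frame via (\ref{v_transf_rules})--(\ref{cv_transf_rules}) and matching coefficients, with the converse obtained by reading the gluing backwards. Your bookkeeping checks out, including the identification of $\partial\widetilde{y}_1^k/\partial x^i$ as the Hessian term responsible for the inhomogeneous correction and the two sources of the factor $(dt/d\widetilde{t})^2$.
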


\begin{example}
Let us consider $\varphi =(\varphi _{ij}(x))$ a semi-Riemannian metric on
the spatial manifold $M$ and let%
\begin{equation*}
\gamma _{jk}^{i}=\frac{\varphi ^{im}}{2}\left( \frac{\partial \varphi _{jm}}{%
\partial x^{k}}+\frac{\partial \varphi _{km}}{\partial x^{j}}-\frac{\partial
\varphi _{jk}}{\partial x^{m}}\right)
\end{equation*}%
be its Christoffel symbols. Taking into account that we have the
transformation rules%
\begin{equation}
\widetilde{\gamma }_{qr}^{p}=\gamma _{jk}^{i}\frac{\partial \widetilde{x}^{p}%
}{\partial x^{i}}\frac{\partial x^{j}}{\partial \widetilde{x}^{q}}\frac{%
\partial x^{k}}{\partial \widetilde{x}^{r}}+\frac{\partial \widetilde{x}^{p}%
}{\partial x^{l}}\frac{\partial ^{2}x^{l}}{\partial \widetilde{x}%
^{q}\partial \widetilde{x}^{r}},  \label{s-Cris-symb}
\end{equation}%
we deduce that the local components%
\begin{equation*}
\mathring{G}_{(1)1}^{(j)}=\frac{1}{2}\gamma _{kl}^{j}y_{1}^{k}y_{1}^{l}
\end{equation*}%
define a spatial semispray $\mathring{G}=\left( \mathring{G}%
_{(1)1}^{(j)}\right) $ on $J^{1}(\mathbb{R},M)$. This is called the \textbf{%
canonical spatial semispray associated to the spatial metric} $\varphi (x)$.
\end{example}

It is important to note that our notions of temporal and spatial semispray
naturally generalize the notion of semispray (or semigerbe in the French
terminology) which was defined since 1960's (Dazord, Klein, Foulon, de Leon,
Miron and Anastasiei etc.) as a global vector field. Comparatively, we point
out that our temporal or spatial semisprays can be regarded in the form%
\begin{equation*}
H=dt\otimes H_{1}\text{ and }G=dt\otimes G_{1},
\end{equation*}%
where the geometrical objects (similarly with the classical concepts of
semisprays or semigerbes)%
\begin{equation*}
H_{1}=\frac{\partial }{\partial t}-2H_{(1)1}^{(j)}\frac{\partial }{\partial
y_{1}^{j}}
\end{equation*}%
and%
\begin{equation*}
G_{1}=y_{1}^{i}\frac{\partial }{\partial x^{i}}-2G_{(1)1}^{(j)}\frac{%
\partial }{\partial y_{1}^{j}}
\end{equation*}%
cannot be regarded as global vector fields because they behave as the
components of some d-covector fields on the 1-jet space $J^{1}(\mathbb{R},M)$%
. In other words, taking into account the transformation rules (\ref{rgg}),
they transform by the laws%
\begin{equation*}
\widetilde{H}_{1}=\frac{dt}{d\widetilde{t}}H_{1}\text{ and }\widetilde{G}%
_{1}=\frac{dt}{d\widetilde{t}}G_{1}.
\end{equation*}

It is obvious now that, if we work only with particular transformations (\ref%
{rgg}) in which the time $t$ is absolute one (i.e. $\widetilde{t}=t$), then
the geometrical objects $H_{1}$ and $G_{1}$ become global vector fields and,
consequently, we recover the classical definition of a semispray or a
semigerbe.

\begin{definition}
A pair $\mathcal{S}=(H,G)$, which consists of a temporal semispray $H$ and a
spatial semispray $G$, is called a \textbf{relativistic time dependent
semispray} on the 1-jet space $J^{1}(\mathbb{R},M)$.
\end{definition}

\begin{remark}
The geometrical concept of \textbf{relativistic time dependent semispray} on
the 1-jet space $J^{1}(\mathbb{R},M)$ naturally generalizes the already
classical notion of \textbf{time dependent semispray} on $\mathbb{R}\times
TM $, used by Miron and Anastasiei in [6].
\end{remark}

\begin{example}
The pair $\mathcal{\mathring{S}}=(\mathring{H},\mathring{G})$, where $%
\mathring{H}$ (respectively $\mathring{G}$) is the canonical temporal
(respectively spatial) semispray associated to the temporal (respectively
spatial) metric $h_{11}(t)$ (respectively $\varphi _{ij}(x)$), is a
relativistic time dependent semispray on the 1-jet space $J^{1}(\mathbb{R}%
,M) $. This is called the \textbf{canonical relativistic time dependent
semispray associated to the pair of metrics }$(h(t),\varphi (x))$.
\end{example}

In order to underline the importance of the canonical relativistic time
dependent semispray $\mathcal{\mathring{S}}$ associated to the pair of
metrics $(h_{11}(t),\varphi _{ij}(x))$, we give the following geometrical
result which characterizes the relativistic time dependent semisprays on
1-jet spaces:

\begin{proposition}
\label{t-d-sspray} Let $(\mathbb{R},h_{11}(t))$ be a Riemannian manifold and
let $(M,\varphi _{ij}(x))$ be a semi-Riemannian manifold. Let $\mathcal{S}%
=(H,G)$ be an arbitrary relativistic time dependent semispray on the 1-jet
space $J^{1}(\mathbb{R},M)$. Then, there exists a unique pair of d-tensors 
\begin{equation*}
\mathcal{T}=\left( T_{(1)1}^{(i)},S_{(1)1}^{(i)}\right)
\end{equation*}%
such that 
\begin{equation*}
\mathcal{S}=\mathcal{\mathring{S}}-\mathcal{T},
\end{equation*}%
where $\mathcal{\mathring{S}}=(\mathring{H},\mathring{G})$ is the canonical
relativistic time dependent semispray associated to the pair of metrics $%
(h(t),\varphi (x))$.
\end{proposition}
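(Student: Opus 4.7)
The plan is to construct $\mathcal{T}$ directly by componentwise subtraction: set
\begin{equation*}
T^{(i)}_{(1)1} := \mathring{H}^{(i)}_{(1)1} - H^{(i)}_{(1)1}, \qquad S^{(i)}_{(1)1} := \mathring{G}^{(i)}_{(1)1} - G^{(i)}_{(1)1},
\end{equation*}
so that the relation $\mathcal{S} = \mathcal{\mathring{S}} - \mathcal{T}$ holds in every chart by definition, and uniqueness is immediate because any other pair with the same property must coincide with $\mathcal{T}$ chart by chart. Thus the entire content of the proposition reduces to verifying that the two collections of local functions just defined genuinely transform as d-tensor fields of the stated index type.

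To check this for $T^{(i)}_{(1)1}$, I would write down the transformation law (\ref{tr-rules-t-s}) once for $H^{(i)}_{(1)1}$ and once for $\mathring{H}^{(i)}_{(1)1}$, and subtract. The inhomogeneous term $-\tfrac{dt}{d\widetilde{t}}\tfrac{\partial \widetilde{y}_{1}^{k}}{\partial t}$ is common to every temporal semispray, so it cancels identically and leaves
\begin{equation*}
2\widetilde{T}^{(k)}_{(1)1} = 2T^{(j)}_{(1)1}\left(\frac{dt}{d\widetilde{t}}\right)^{2}\frac{\partial \widetilde{x}^{k}}{\partial x^{j}}.
\end{equation*}
It then remains to recognise this as the correct specialisation of the generic d-tensor rule (\ref{tr-rules-d-tensors}) to the index pattern consisting of one upper pair $^{(i)}_{(1)}$ and one lower temporal index $_{1}$: the upper pair contributes a factor $(\partial \widetilde{x}^{k}/\partial x^{j})(dt/d\widetilde{t})$ and the lower $_{1}$ contributes a further $dt/d\widetilde{t}$, reproducing the factor $(dt/d\widetilde{t})^{2}(\partial \widetilde{x}^{k}/\partial x^{j})$ just displayed.

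The argument for $S^{(i)}_{(1)1}$ is structurally identical, but now I subtract the two instances of (\ref{tr-rules-s-s}) applied to $G^{(i)}_{(1)1}$ and $\mathring{G}^{(i)}_{(1)1}$. The non-tensorial correction $-\tfrac{\partial x^{i}}{\partial \widetilde{x}^{j}}\tfrac{\partial \widetilde{y}_{1}^{k}}{\partial x^{i}}\widetilde{y}_{1}^{j}$, although it depends on $\widetilde{y}_{1}^{j}$ and is more intricate than in the temporal case, is again a universal function of the coordinate change alone, common to every spatial semispray; it therefore drops out of the difference $\mathring{G}-G$ and leaves exactly the same d-tensor law as for $T$.

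The only real obstacle is the index bookkeeping: one has to be confident that the cleaned-up transformation $\widetilde{T}^{(k)}_{(1)1} = T^{(j)}_{(1)1}(dt/d\widetilde{t})^{2}(\partial \widetilde{x}^{k}/\partial x^{j})$ really is the restriction of the full rule (\ref{tr-rules-d-tensors}) to the index pattern $^{(i)}_{(1)1}$. Once this pattern-matching is established, no further geometric input is needed and the verification is purely mechanical.
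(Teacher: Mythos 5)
Your proposal is correct and is essentially the paper's own argument: the paper's proof consists precisely of the observation that the inhomogeneous terms in (\ref{tr-rules-t-s}) and (\ref{tr-rules-s-s}) depend only on the coordinate change (and the point of $J^{1}(\mathbb{R},M)$), hence cancel in the difference of two temporal (respectively spatial) semisprays, leaving the d-tensor law. You merely spell out the componentwise definition of $\mathcal{T}$, the cancellation, and the index pattern-matching that the paper leaves implicit.
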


\begin{proof}
Taking into account that the difference between two temporal (respectively
spatial) semisprays is a d-tensor (see the relations (\ref{tr-rules-t-s})
and (\ref{tr-rules-s-s})), we find the required result.
\end{proof}

Now, let us fix on the 1-jet space $J^{1}(\mathbb{R},M)$ an arbitrary
relativistic time dependent semispray 
\begin{equation*}
\mathcal{S}=(H,G)=\left(
H_{(1)1}^{(i)}(t,x^{k},y_{1}^{k}),G_{(1)1}^{(i)}(t,x^{k},y_{1}^{k})\right) .
\end{equation*}

\begin{definition}
A smooth curve $c:t\in I\subset \mathbb{R}\rightarrow c(t)=(x^{i}(t))\in M$,
which verifies the second order differential equations (SODEs)%
\begin{equation}
\frac{d^{2}x^{i}}{dt^{2}}+2H_{(1)1}^{(i)}\left( t,x^{k}(t),\frac{dx^{k}}{dt}%
\right) +2G_{(1)1}^{(i)}\left( t,x^{k}(t),\frac{dx^{k}}{dt}\right) =0,
\label{harm-curve-eq}
\end{equation}%
where $i$ run from $1$ to $n$, is called a \textbf{harmonic curve of the
relativistic time dependent semispray} $\mathcal{S}=(H,G)$.
\end{definition}

\begin{remark}
The SODEs (\ref{harm-curve-eq}) are invariant under a transformation of
coordinates given by (\ref{rgg}). It follows that the form of equations (\ref%
{harm-curve-eq}), which give the harmonic curves of a relativistic time
dependent semispray $S=(H,G)$, have a global character on the 1-jet space $%
J^{1}(\mathbb{R},M)$.
\end{remark}

\begin{remark}
The equations of the \textbf{harmonic curves} (\ref{harm-curve-eq})
naturally generalize the equations of the \textbf{paths} of a time dependent
semispray from \textit{classical non-autonomous Lagrangian geometry }[6].
\end{remark}

\begin{example}
The equations of the harmonic curves of the canonical relativistic time
dependent semispray $\mathcal{\mathring{S}}=(\mathring{H},\mathring{G})$
associated to the pair of metrics $(h(t),\varphi (x))$ are%
\begin{equation}
\frac{d^{2}x^{i}}{dt^{2}}-H_{11}^{1}(t)\frac{dx^{i}}{dt}+\gamma _{jk}^{i}(x)%
\frac{dx^{j}}{dt}\frac{dx^{k}}{dt}=0.  \label{affine-maps-eq}
\end{equation}%
These are the equations of the \textbf{affine maps }between the Riemannian
manifold $(\mathbb{R},h_{11}(t))$ and the semi-Riemannian manifold $%
(M,\varphi _{ij}(x))$. We point out that the affine maps between the
manifolds $(\mathbb{R},h_{11}(t))$ and $(M,\varphi _{ij}(x))$ are curves
which carry the geodesics of the temporal manifold $(\mathbb{R},h_{11}(t))$
into the geodesics on the spatial manifold $(M,\varphi _{ij}(x))$.
\end{example}

\begin{remark}
Multiplying the equations (\ref{affine-maps-eq}) with $h^{11}=1/h_{11}\neq 0$%
, we obtain the equivalent equations%
\begin{equation*}
h^{11}\left[ \frac{d^{2}x^{i}}{dt^{2}}-H_{11}^{1}(t)\frac{dx^{i}}{dt}+\gamma
_{jk}^{i}(x)\frac{dx^{j}}{dt}\frac{dx^{k}}{dt}\right] =0.
\end{equation*}%
These are exactly the classical equations of the \textbf{harmonic maps}
between the manifolds $(\mathbb{R},h_{11}(t))$ and $(M,\varphi _{ij}(x))$
(see [4]). For this reason, we used the terminology of \textbf{harmonic
curves} for the solutions of the SODEs (\ref{harm-curve-eq}).
\end{remark}

\begin{remark}
The jet geometrical concept of \textbf{harmonic curve of a relativistic time
dependent semispray} $\mathcal{S}=(H,G)$ is intimately connected by the 
\textbf{Euler-Lagrange equations} produced by a \textbf{relativistic time
dependent Lagrangian }$\mathcal{L}=L\sqrt{h_{11}(t)}$, where $L:J^{1}(%
\mathbb{R},M)\rightarrow \mathbb{R}$. The connection is given by the fact
that the Euler-Lagrange equations of any non-degenerate Lagrangian $\mathcal{%
L}$ can be written in the form (\ref{harm-curve-eq}). For example, the
Euler-Lagrange equations of the jet Lagrangian 
\begin{equation*}
\mathcal{L}_{\text{harmonic}}=h^{11}(t)\varphi _{ij}(x)y_{1}^{i}y_{1}^{j}%
\sqrt{h_{11}(t)},
\end{equation*}%
where $y_{1}^{i}=dx^{i}/dt$, are exactly the equations of the affine maps (%
\ref{affine-maps-eq}). The equations (\ref{affine-maps-eq}) are in fact the
equations (\ref{harm-curve-eq}) for the particular relativistic time
dependent semispray $\mathcal{\mathring{S}}=(\mathring{H},\mathring{G})$
associated to the pair of metrics $(h(t),\varphi (x))$.
\end{remark}

In this context, using the notations from Proposition \ref{t-d-sspray}, we
immediately deduce the following interesting result:

\begin{corollary}
The equations (\ref{harm-curve-eq}) of the harmonic curves of a relativistic
time dependent semispray $\mathcal{S}=(H,G)$ on the 1-jet space $J^{1}(%
\mathbb{R},M)$ can be always rewritten in the following equivalent \textbf{%
generalized Poisson form}:%
\begin{equation*}
h^{11}\left[ \frac{d^{2}x^{i}}{dt^{2}}-H_{11}^{1}(t)\frac{dx^{i}}{dt}+\gamma
_{jk}^{i}(x)\frac{dx^{j}}{dt}\frac{dx^{k}}{dt}\right] =F^{i}\left(
t,x^{k}(t),\frac{dx^{k}}{dt}\right) ,
\end{equation*}%
where%
\begin{equation*}
F^{i}=2h^{11}\left[ T_{(1)1}^{(i)}+S_{(1)1}^{(i)}\right] .
\end{equation*}
\end{corollary}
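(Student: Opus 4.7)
The plan is to do a direct substitution using the decomposition supplied by Proposition~\ref{t-d-sspray} and then renormalize by $h^{11}$. Since $\mathcal{S} = \mathcal{\mathring{S}} - \mathcal{T}$, componentwise we have
\begin{equation*}
H_{(1)1}^{(i)} = \mathring{H}_{(1)1}^{(i)} - T_{(1)1}^{(i)}, \qquad G_{(1)1}^{(i)} = \mathring{G}_{(1)1}^{(i)} - S_{(1)1}^{(i)},
\end{equation*}
and the explicit forms $\mathring{H}_{(1)1}^{(i)} = -\tfrac{1}{2} H_{11}^{1}(t)\, y_{1}^{i}$ and $\mathring{G}_{(1)1}^{(i)} = \tfrac{1}{2}\gamma_{jk}^{i}(x)\, y_{1}^{j} y_{1}^{k}$ have already been computed in the examples preceding the corollary.

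Next I would substitute this decomposition into the harmonic curve SODE (\ref{harm-curve-eq}) along a curve $c(t) = (x^{i}(t))$, using the on-curve identification $y_{1}^{k} = dx^{k}/dt$. The canonical temporal semispray contributes the linear term $-H_{11}^{1}(t)\, dx^{i}/dt$ and the canonical spatial semispray contributes the quadratic Christoffel term $\gamma_{jk}^{i}(x)\, (dx^{j}/dt)(dx^{k}/dt)$, while the d-tensor pieces collect on the right-hand side as $2\bigl[T_{(1)1}^{(i)} + S_{(1)1}^{(i)}\bigr]$ evaluated along the lift of $c$.

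Finally, I would multiply the resulting equation by $h^{11}(t) = 1/h_{11}(t) \neq 0$; the left-hand side is then exactly the bracket appearing in the statement of the corollary, and the right-hand side becomes
\begin{equation*}
F^{i}\left(t, x^{k}(t), \frac{dx^{k}}{dt}\right) = 2 h^{11}\left[T_{(1)1}^{(i)} + S_{(1)1}^{(i)}\right],
\end{equation*}
as required. There is really no obstacle here beyond bookkeeping; the content of the result lies entirely in Proposition~\ref{t-d-sspray}, which guarantees that the residual $F^{i}$ transforms as a d-tensor (so that the right-hand side is a legitimate anisotropic force field) rather than picking up inhomogeneous terms from the jet coordinate change (\ref{rgg}).
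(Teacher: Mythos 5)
Your argument is correct and is exactly the deduction the paper has in mind: it presents the corollary as an immediate consequence of Proposition \ref{t-d-sspray}, namely substituting $H_{(1)1}^{(i)}=\mathring{H}_{(1)1}^{(i)}-T_{(1)1}^{(i)}$ and $G_{(1)1}^{(i)}=\mathring{G}_{(1)1}^{(i)}-S_{(1)1}^{(i)}$ into (\ref{harm-curve-eq}), moving the d-tensor terms to the right-hand side, and multiplying by $h^{11}\neq 0$. Your closing observation that the d-tensor character of $\mathcal{T}$ is what makes $F^{i}$ a well-defined force term is a worthwhile point that the paper leaves implicit.
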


\section{Jet nonlinear connections. Adapted bases}

\hspace{5mm}We have seen that the transformation rules of the canonical
bases of vector fields (\ref{v_transf_rules}) or covector fields (\ref%
{cv_transf_rules}) imply complicated transformation rules for the local
components of diverse geometrical objects (as the classical tensors, for
example) on the 1-jet space $J^{1}(\mathbb{R},M)$. For such a reason, it is
necessary to construct that so called the \textit{adapted bases} attached to
a \textit{nonlinear connection} on $J^{1}(\mathbb{R},M)$. These adapted
bases have the quality to simplify the transformation rules of the local
components of the jet geometrical objects taken in study.

In order to do this geometrical construction, let us consider an arbitrary
point $u\in E=J^{1}(\mathbb{R},M)$ and let us take the differential map%
\begin{equation*}
\pi _{\ast ,u}:T_{u}E\rightarrow T_{(t,x)}(\mathbb{R}\times M)
\end{equation*}%
produced by the canonical projection%
\begin{equation*}
\pi :E\rightarrow \mathbb{R}\times M,\text{ }\pi (u)=(t,x).
\end{equation*}

The differential map $\pi _{\ast ,u}$ generates the vector subspace 
\begin{equation*}
V_{u}=Ker\pi _{\ast ,u}\subset T_{u}E,
\end{equation*}%
whose dimension is $\dim _{\mathbb{R}}V_{u}=n,$ $\forall $ $u\in E,$ because 
$\pi _{\ast ,u}$ is a surjection. Moreover, a basis in the vector subspace $%
V_{u}$ is given by 
\begin{equation*}
\left\{ \left. \frac{\partial }{\partial y_{1}^{i}}\right\vert _{u}\right\} .
\end{equation*}%
It follows that the map%
\begin{equation*}
\mathcal{V}:u\in E\rightarrow V_{u}\subset T_{u}E
\end{equation*}%
is a differential distribution on $J^{1}(\mathbb{R},M)$, which is called the 
\textit{vertical distribution} of the 1-jet space $E=J^{1}(\mathbb{R},M)$.

\begin{definition}
A \textbf{nonlinear connection} on the 1-jet space $E=J^{1}(\mathbb{R},M)$
is a differential distribution%
\begin{equation*}
\mathcal{H}:u\in E\rightarrow H_{u}\subset T_{u}E
\end{equation*}%
which verifies the equalities%
\begin{equation*}
T_{u}E=H_{u}\oplus V_{u},\text{ }\forall \text{ }u\in E.
\end{equation*}

The differential distribution $\mathcal{H}$ is also called the \textbf{%
horizontal distribution} of the 1-jet space $J^{1}(\mathbb{R},M)$.
\end{definition}

\begin{remark}
(i) It is obvious that the dimension of a horizontal distribution is%
\begin{equation*}
\dim _{\mathbb{R}}H_{u}=n+1,\text{ }\forall \text{ }u\in E.
\end{equation*}

(ii) The set $\mathcal{X}(E)$ of the vector fields on $E=J^{1}(\mathbb{R},M)$
decomposes in the direct sum%
\begin{equation}
\mathcal{X}(E)=\Gamma (\mathcal{H})\oplus \Gamma (\mathcal{V}),
\label{decomposition}
\end{equation}%
where $\Gamma (\mathcal{H})$ (respectively $\Gamma (\mathcal{V})$)
represents the set of the horizontal (respectively vertical) sections.
\end{remark}

Taking into account that a given nonlinear connection $\mathcal{H}$ on the
1-jet space $E=J^{1}(\mathbb{R},M)$ produces the isomorphisms%
\begin{equation*}
\left. \pi _{\ast ,u}\right\vert _{H_{u}}:H_{u}\rightarrow T_{\pi (u)}(%
\mathbb{R}\times M),\text{ }\forall \text{ }u\in E,
\end{equation*}%
by direct local computations we deduce the following geometrical results:

\begin{proposition}
(i) There exist some unique linearly independent horizontal vector fields $%
\delta /\delta t,$ $\delta /\delta x^{i}\in \Gamma (\mathcal{H})$ having the
properties%
\begin{equation}
\pi _{\ast }\left( \frac{\delta }{\delta t}\right) =\frac{\partial }{%
\partial t}\text{ and }\pi _{\ast }\left( \frac{\delta }{\delta x^{i}}%
\right) =\frac{\partial }{\partial x^{i}}.  \label{conditions}
\end{equation}

(ii) With respect to the natural basis $\left\{ \partial /\partial
t,\partial /\partial x^{i},\partial /\partial y_{1}^{i}\right\} \subset 
\mathcal{X}(E)$, the horizontal vector fields $\delta /\delta t$ and $\delta
/\delta x^{i}$ have the local expressions%
\begin{equation}
\frac{\delta }{\delta t}=\frac{\partial }{\partial t}-M_{(1)1}^{(j)}\frac{%
\partial }{\partial y_{1}^{j}}\text{ and }\frac{\delta }{\delta x^{i}}=\frac{%
\partial }{\partial x^{i}}-N_{(1)i}^{(j)}\frac{\partial }{\partial y_{1}^{j}}%
,  \label{v-a-b}
\end{equation}%
where the functions $M_{(1)1}^{(j)}$ (respectively $N_{(1)i}^{(j)}$) are
defined on the domains of the induced local charts on $E=J^{1}(\mathbb{R},M)$
and they are called the \textbf{temporal }(respectively \textbf{spatial}) 
\textbf{components }of the nonlinear connection $\mathcal{H}$.

(iii) The local components $M_{(1)1}^{(j)}$ and $N_{(1)i}^{(j)}$ transform
on every intersection of preceding induced local charts on $E$ by the rules%
\begin{equation}
\widetilde{M}_{(1)1}^{(k)}=M_{(1)1}^{(j)}\left( \frac{dt}{d\widetilde{t}}%
\right) ^{2}\frac{\partial \widetilde{x}^{k}}{\partial x^{j}}-\frac{dt}{d%
\widetilde{t}}\frac{\partial \widetilde{y}_{1}^{k}}{\partial t}
\label{tr-rules-t-nlc}
\end{equation}%
and%
\begin{equation}
\widetilde{N}_{(1)l}^{(k)}=N_{(1)i}^{(j)}\frac{dt}{d\widetilde{t}}\frac{%
\partial x^{i}}{\partial \widetilde{x}^{l}}\frac{\partial \widetilde{x}^{k}}{%
\partial x^{j}}-\frac{\partial x^{i}}{\partial \widetilde{x}^{l}}\frac{%
\partial \widetilde{y}_{1}^{k}}{\partial x^{i}}.  \label{tr-rules-s-nlc}
\end{equation}

(iv) To give a nonlinear connection $\mathcal{H}$ on the 1-jet space $J^{1}(%
\mathbb{R},M)$ is equivalent to give a set of local functions 
\begin{equation*}
\Gamma =\left( M_{(1)1}^{(j)},N_{(1)i}^{(j)}\right)
\end{equation*}%
on $E=J^{1}(\mathbb{R},M)$, which transform by the rules (\ref%
{tr-rules-t-nlc}) and (\ref{tr-rules-s-nlc}).
\end{proposition}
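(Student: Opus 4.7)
The plan is to settle (i)--(iv) in order, extracting each statement from the isomorphism property $\pi_{\ast,u}|_{H_u}:H_u\to T_{\pi(u)}(\mathbb{R}\times M)$ guaranteed by the decomposition $T_uE=H_u\oplus V_u$. For (i), at each $u\in E$ the map $\pi_{\ast,u}|_{H_u}$ is a linear isomorphism, so I would define $\delta/\delta t|_u$ and $\delta/\delta x^i|_u$ as the unique horizontal vectors whose images under $\pi_{\ast,u}$ are $\partial/\partial t|_{\pi(u)}$ and $\partial/\partial x^i|_{\pi(u)}$ respectively. Uniqueness is immediate, and smoothness of $u\mapsto\delta/\delta t|_u$ follows from smoothness of the horizontal projection determined by the distribution $\mathcal{H}$.

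For (ii), any horizontal field can be expanded in the natural basis $\{\partial/\partial t,\partial/\partial x^i,\partial/\partial y_1^j\}$. Since $\pi_{\ast}$ annihilates $\partial/\partial y_1^j$ and sends $\partial/\partial t,\partial/\partial x^i$ to the corresponding base vectors, the characterization (\ref{conditions}) forces the coefficient of $\partial/\partial t$ in $\delta/\delta t$ to be $1$, the coefficient of $\partial/\partial x^i$ in $\delta/\delta x^j$ to be $\delta_j^i$, and the remaining coefficients must lie in the vertical direction. Writing these remaining coefficients as $-M_{(1)1}^{(j)}$ and $-N_{(1)i}^{(j)}$ gives (\ref{v-a-b}), and these local functions are defined on every induced chart.

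For (iii), the key intrinsic observation is that the horizontal lift characterization (\ref{conditions}) together with the base transformation rules $\partial/\partial t=(d\tilde{t}/dt)\,\tilde{\partial}/\tilde{\partial}\tilde{t}$ and $\partial/\partial x^i=(\partial\tilde{x}^l/\partial x^i)\,\tilde{\partial}/\tilde{\partial}\tilde{x}^l$ forces the tensorial transformations $\delta/\delta t=(d\tilde{t}/dt)\,\tilde{\delta}/\tilde{\delta}\tilde{t}$ and $\delta/\delta x^i=(\partial\tilde{x}^l/\partial x^i)\,\tilde{\delta}/\tilde{\delta}\tilde{x}^l$, by uniqueness of horizontal lifts. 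Substituting the local expressions (\ref{v-a-b}) on both sides and rewriting the LHS via the jet transformation formulas (\ref{v_transf_rules}) produces identities in the basis $\{\tilde{\partial}/\tilde{\partial}\tilde{t},\tilde{\partial}/\tilde{\partial}\tilde{x}^l,\tilde{\partial}/\tilde{\partial}\tilde{y}_1^k\}$; the components along $\tilde{\partial}/\tilde{\partial}\tilde{y}_1^k$ yield, after solving for $\tilde{M}$ and $\tilde{N}$ and multiplying by the appropriate inverse Jacobian (in the spatial case one contracts with $\partial x^i/\partial\tilde{x}^l$), the formulas (\ref{tr-rules-t-nlc}) and (\ref{tr-rules-s-nlc}). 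This bookkeeping step, although conceptually simple, is the main obstacle because one has to keep careful track of the extra terms $\partial\tilde{y}_1^k/\partial t$ and $\partial\tilde{y}_1^k/\partial x^i$ produced by the temporal reparametrization in (\ref{rgg}).

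For (iv), I would prove the converse: given local functions $(M_{(1)1}^{(j)},N_{(1)i}^{(j)})$ obeying (\ref{tr-rules-t-nlc})--(\ref{tr-rules-s-nlc}) on every chart overlap, define on each induced chart the local vector fields by (\ref{v-a-b}). Reversing the calculation of (iii), the transformation rules imposed on $(M,N)$ are precisely the compatibility conditions guaranteeing that the fields $\delta/\delta t,\delta/\delta x^i$ agree on overlaps, so they glue to globally defined fields on $E$. Their linear independence together with $\{\partial/\partial y_1^j\}$ is immediate from the triangular form in (\ref{v-a-b}), and therefore $\mathcal{H}_u:=\mathrm{span}\{\delta/\delta t|_u,\delta/\delta x^i|_u\}$ is a rank $n+1$ distribution complementary to $\mathcal{V}_u$, i.e.\ a nonlinear connection whose horizontal fields coincide with those of (i)--(ii). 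This establishes the claimed one-to-one correspondence.
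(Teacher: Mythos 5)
Your proposal is correct and follows the same route the paper indicates: it uses the isomorphism $\left.\pi_{\ast,u}\right\vert_{H_{u}}:H_{u}\rightarrow T_{\pi(u)}(\mathbb{R}\times M)$ to define the horizontal lifts uniquely, reads off the local form from $\ker\pi_{\ast}$, and obtains (\ref{tr-rules-t-nlc})--(\ref{tr-rules-s-nlc}) by equating the $\partial/\partial\widetilde{y}_{1}^{k}$ components after substituting (\ref{v_transf_rules}), which is precisely the ``direct local computation'' the paper alludes to and later spells out for the adapted bases. The details you supply (the vanishing of the $\partial/\partial x^{i}$ coefficient in $\delta/\delta t$, the contraction with $\partial x^{i}/\partial\widetilde{x}^{l}$ in the spatial case, and the gluing argument for the converse in (iv)) are all sound.
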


\begin{example}
\label{cannlc} Let $(\mathbb{R},h_{11}(t))$ be a Riemannian manifold and let 
$(M,\varphi _{ij}(x))$ be a semi-Rie\-ma\-nni\-an manifolds. Let us consider
the Christoffel symbols $H_{11}^{1}(t)$ and $\gamma _{jk}^{i}(x)$. Then,
using the transformation rules (\ref{rgg}), (\ref{t-Cris-symb}), and (\ref%
{s-Cris-symb}), we deduce that the set of local functions%
\begin{equation*}
\mathring{\Gamma}=\left( \mathring{M}_{(1)1}^{(j)},\mathring{N}%
_{(1)i}^{(j)}\right) ,
\end{equation*}%
where%
\begin{equation*}
\mathring{M}_{(1)1}^{(j)}=-H_{11}^{1}y_{1}^{j}\text{ \ \ and \ \ }\mathring{N%
}_{(1)i}^{(j)}=\gamma _{im}^{j}y_{1}^{m},
\end{equation*}%
represents a nonlinear connection on the 1-jet space $J^{1}(\mathbb{R},M)$.
This jet nonlinear connection is called the \textbf{canonical nonlinear
connection attached to the pair of metrics} $(h(t),\varphi (x))$.
\end{example}

In the sequel, let us fix $\Gamma =\left(
M_{(1)1}^{(j)},N_{(1)i}^{(j)}\right) $ a nonlinear connection on the 1-jet
space $E=J^{1}(\mathbb{R},M)$. The nonlinear connection $\Gamma $ produces
the horizontal vector fields (\ref{v-a-b}) and the covector fields%
\begin{equation}
\delta y_{1}^{i}=dy_{1}^{i}+M_{(1)1}^{(i)}dt+N_{(1)j}^{(i)}dx^{j}.
\label{cv-a-b}
\end{equation}

It is easy to see now that the set of vector fields%
\begin{equation}
\left\{ \frac{\delta }{\delta t},\frac{\delta }{\delta x^{i}},\dfrac{%
\partial }{\partial y_{1}^{i}}\right\} \subset \mathcal{X}(E)
\label{ad-basis-vf}
\end{equation}%
represents a \textit{basis} in the set of vector fields on $J^{1}(\mathbb{R}%
,M)$ and the set of covector fields 
\begin{equation}
\left\{ dt,dx^{i},\delta y_{1}^{i}\right\} \subset \mathcal{X}^{\ast }(E)
\label{ad-basis-cvf}
\end{equation}%
represents its \textit{dual basis} in the set of 1-forms on $J^{1}(\mathbb{R}%
,M)$.

\begin{definition}
The dual bases (\ref{ad-basis-vf}) and (\ref{ad-basis-cvf}) are called the 
\textbf{adapted bases} attached to the nonlinear connection $\Gamma $ on the
1-jet space $E=J^{1}(\mathbb{R},M)$.
\end{definition}

The big advantage of the adapted bases produced by the nonlinear connection $%
\Gamma $ is that the transformation laws of their elements are simple and
natural.

\begin{proposition}
The local transformation laws of the elements of the adapted bases (\ref%
{ad-basis-vf}) and (\ref{ad-basis-cvf})$,$ associated to the nonlinear
connection $\Gamma =\left( M_{(1)1}^{(j)},N_{(1)i}^{(j)}\right) ,$ are 
\textbf{classical tensorial ones}:%
\begin{equation}
\left\{ 
\begin{array}{l}
\dfrac{\delta }{\delta t}=\dfrac{d\widetilde{t}}{dt}\dfrac{\delta }{\delta 
\widetilde{t}}\medskip \\ 
\dfrac{\delta }{\delta x^{i}}=\dfrac{\partial \widetilde{x}^{j}}{\partial
x^{i}}\dfrac{\delta }{\delta \widetilde{x}^{j}}\medskip \\ 
\dfrac{\partial }{\partial y_{1}^{i}}=\dfrac{\partial \widetilde{x}^{j}}{%
\partial x^{i}}\dfrac{dt}{d\widetilde{t}}\dfrac{\partial }{\partial 
\widetilde{y}_{1}^{j}}%
\end{array}%
\right.  \label{tr-rules-v-a-b}
\end{equation}%
and%
\begin{equation}
\left\{ 
\begin{array}{l}
dt=\dfrac{dt}{d\widetilde{t}}d\widetilde{t}\medskip \\ 
dx^{i}=\dfrac{\partial x^{i}}{\partial \widetilde{x}^{j}}d\widetilde{x}%
^{j}\medskip \\ 
\delta y_{1}^{i}=\dfrac{\partial x^{i}}{\partial \widetilde{x}^{j}}\dfrac{d%
\widetilde{t}}{dt}\delta \widetilde{y}_{1}^{j}.%
\end{array}%
\right.  \label{tr-rules-cv-a-b}
\end{equation}
\end{proposition}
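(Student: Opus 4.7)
The proposition has essentially been set up to fall out of the transformation rules already proved for the nonlinear connection coefficients; the task is to carry out a bookkeeping computation and then invoke duality. I would organize the argument in three blocks.

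First, the transformation law for $\partial/\partial y_1^i$ requires nothing new: it is simply the third equation of (\ref{v_transf_rules}), which is already written in the desired tensorial form. So the content is entirely in the two horizontal vector fields $\delta/\delta t$ and $\delta/\delta x^i$. For $\delta/\delta t$, I would start from the definition (\ref{v-a-b}), substitute the expansions of $\partial/\partial t$ and $\partial/\partial y_1^j$ from (\ref{v_transf_rules}), and collect the resulting terms on the basis $\{\partial/\partial\tilde t,\partial/\partial\tilde y_1^k\}$. The coefficient of $\partial/\partial\tilde t$ is immediately $d\tilde t/dt$, and the coefficient of $\partial/\partial\tilde y_1^k$ comes out to
\begin{equation*}
\frac{\partial\tilde y_1^k}{\partial t}-M_{(1)1}^{(j)}\frac{\partial\tilde x^k}{\partial x^j}\frac{dt}{d\tilde t}.
\end{equation*}
Using the transformation rule (\ref{tr-rules-t-nlc}) for $M_{(1)1}^{(j)}$, this expression reorganizes to $-(d\tilde t/dt)\tilde M_{(1)1}^{(k)}$, which is exactly the coefficient needed to recover $(d\tilde t/dt)\,\delta/\delta\tilde t$. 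The argument for $\delta/\delta x^i$ is entirely parallel: expand via (\ref{v-a-b}), apply (\ref{v_transf_rules}) to the two partial derivatives, and use (\ref{tr-rules-s-nlc}) for $N_{(1)i}^{(j)}$ to clear the non-tensorial terms, leaving the classical rule $(\partial\tilde x^j/\partial x^i)\,\delta/\delta\tilde x^j$.

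For the dual basis (\ref{tr-rules-cv-a-b}), the first two equations for $dt$ and $dx^i$ are already contained in (\ref{cv_transf_rules}) and need no further work. For $\delta y_1^i$ there are two equally clean options: either substitute (\ref{cv_transf_rules}) directly into the definition (\ref{cv-a-b}) and eliminate the unwanted terms using the transformation rules (\ref{tr-rules-t-nlc})--(\ref{tr-rules-s-nlc}) (the same identities used above, but employed in the opposite direction), or argue by duality. The latter is cleaner: since $\{dt,dx^i,\delta y_1^i\}$ is the dual coframe to $\{\delta/\delta t,\delta/\delta x^i,\partial/\partial y_1^i\}$ and the first block has just established the tensorial law for the frame, the transpose-inverse of those transformation matrices automatically gives the claimed law for $\delta y_1^i$.

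The calculations are routine, so the only real obstacle is notational: keeping the jet indices $(1)$ and $(i)$ in their prescribed positions while moving Jacobian factors $\partial\tilde x/\partial x$, $dt/d\tilde t$, and their inverses across one another. The decisive point at which the argument actually uses something nontrivial is the cancellation of the inhomogeneous terms in the coefficient of $\partial/\partial\tilde y_1^k$; this is precisely the step where the transformation rules (\ref{tr-rules-t-nlc}) and (\ref{tr-rules-s-nlc}) of the nonlinear connection are invoked. Indeed, those rules were originally extracted in the preceding proposition by demanding exactly the conditions (\ref{conditions}) on $\delta/\delta t$ and $\delta/\delta x^i$, so the present proposition is, in effect, the converse consistency check.
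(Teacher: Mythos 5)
Your proof is correct, but for the horizontal frame fields it follows a genuinely different route from the paper. You prove the laws for $\delta/\delta t$ and $\delta/\delta x^{i}$ by brute force: substitute the chain-rule expansions (\ref{v_transf_rules}) into the local expressions (\ref{v-a-b}) and invoke the transformation rules (\ref{tr-rules-t-nlc}) and (\ref{tr-rules-s-nlc}) of $M_{(1)1}^{(j)}$ and $N_{(1)i}^{(j)}$ to cancel the inhomogeneous terms in the coefficient of $\partial/\partial\widetilde{y}_{1}^{k}$ --- and your displayed coefficient and its reduction to $-(d\widetilde{t}/dt)\,\widetilde{M}_{(1)1}^{(k)}$ do check out. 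The paper instead argues invariantly: by (\ref{conditions}), the vector field $\delta/\delta t-(d\widetilde{t}/dt)\,\delta/\delta\widetilde{t}$ is killed by $\pi_{\ast}$, hence vertical, while being manifestly horizontal, so it vanishes by the direct-sum decomposition (\ref{decomposition}); no connection-coefficient bookkeeping is needed. Your computation buys an explicit consistency check against (\ref{tr-rules-t-nlc})--(\ref{tr-rules-s-nlc}) (and, as you note, is essentially the derivation of those rules run backwards, so it is not circular); the paper's argument is shorter and coordinate-free but leans on the uniqueness clause of the earlier proposition. For the coframe, the duality option you prefer is exactly what the paper does: it expands $\delta y_{1}^{i}$ against the tilded adapted frame and uses the just-established frame laws together with $\delta y_{1}^{i}(\delta/\delta t)=\delta y_{1}^{i}(\delta/\delta x^{k})=0$ and $\delta y_{1}^{i}(\partial/\partial y_{1}^{k})=\delta_{k}^{i}$. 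Either variant is complete; no gap.
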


\begin{proof}
Using the properties (\ref{conditions}), we immediately deduce that we have%
\begin{equation*}
\pi _{\ast }\left( \frac{\delta }{\delta t}\right) =\frac{\partial }{%
\partial t}=\dfrac{d\widetilde{t}}{dt}\dfrac{\partial }{\partial \widetilde{t%
}}=\pi _{\ast }\left( \dfrac{d\widetilde{t}}{dt}\frac{\delta }{\delta 
\widetilde{t}}\right) .
\end{equation*}%
In other words, the temporal horizontal vector field%
\begin{equation*}
\frac{\delta }{\delta t}-\dfrac{d\widetilde{t}}{dt}\frac{\delta }{\delta 
\widetilde{t}}\in \Gamma (\mathcal{H})\cap \Gamma (\mathcal{V})
\end{equation*}%
is also a vertical vector field. Taking into account the decomposition (\ref%
{decomposition}), it follows the required result.

By analogy, we treat the spatial horizontal vector fields $\delta /\delta
x^{i}$.

Finally, let us remark that we have the equalities%
\begin{eqnarray*}
\delta y_{1}^{i} &=&\delta y_{1}^{i}\left( \dfrac{\delta }{\delta \widetilde{%
t}}\right) d\widetilde{t}+\delta y_{1}^{i}\left( \dfrac{\delta }{\delta 
\widetilde{x}^{j}}\right) d\widetilde{x}^{j}+\delta y_{1}^{i}\left( \dfrac{%
\partial }{\partial \widetilde{y}_{1}^{j}}\right) \delta \widetilde{y}%
_{1}^{j} \\
&=&\delta y_{1}^{i}\left( \dfrac{dt}{d\widetilde{t}}\dfrac{\delta }{\delta t}%
\right) d\widetilde{t}+\delta y_{1}^{i}\left( \dfrac{\partial x^{k}}{%
\partial \widetilde{x}^{j}}\dfrac{\delta }{\delta x^{k}}\right) d\widetilde{x%
}^{j}+\delta y_{1}^{i}\left( \dfrac{\partial x^{k}}{\partial \widetilde{x}%
^{j}}\dfrac{d\widetilde{t}}{dt}\dfrac{\partial }{\partial y_{1}^{k}}\right)
\delta \widetilde{y}_{1}^{j} \\
&=&\dfrac{\partial x^{i}}{\partial \widetilde{x}^{j}}\dfrac{d\widetilde{t}}{%
dt}\delta \widetilde{y}_{1}^{j}.
\end{eqnarray*}
\end{proof}

\begin{corollary}
Any d-tensor field $D=\left( D_{1k(1)(l)...}^{1i(j)(1)...}\right) $ on the
1-jet space $J^{1}(\mathbb{R},M)$ is a classical tensor field on $J^{1}(%
\mathbb{R},M)$.
\end{corollary}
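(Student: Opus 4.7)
The strategy is to read the components $D_{1k(1)(l)\ldots}^{1i(j)(1)\ldots}$ as the coefficients of a global tensor expressed in the adapted dual bases (\ref{ad-basis-vf}) and (\ref{ad-basis-cvf}), then to verify coordinate independence. First, to each upper index I attach one factor of an adapted vector field, and to each lower index one factor of an adapted $1$-form, according to the rule: temporal $1$ $\leftrightarrow\delta/\delta t$ or $dt$; spatial $i$ or $k$ $\leftrightarrow \delta/\delta x^{i}$ or $dx^{k}$; and the vertical pair $(j)(1)$ or $(1)(l)$ $\leftrightarrow \partial/\partial y_{1}^{j}$ or $\delta y_{1}^{l}$. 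This produces the candidate tensor
\[
D\;=\;D_{1k(1)(l)\ldots}^{1i(j)(1)\ldots}\,\frac{\delta}{\delta t}\otimes\frac{\delta}{\delta x^{i}}\otimes\frac{\partial}{\partial y_{1}^{j}}\otimes\cdots\otimes dt\otimes dx^{k}\otimes\delta y_{1}^{l}\otimes\cdots
\]
with implicit summation over the repeated indices.

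The second step is to check that the same formula, written in tilded coordinates, yields the same geometric object. Substituting the d-tensor rule (\ref{tr-rules-d-tensors}) for the untilded components, the right-hand side becomes $\widetilde{D}_{1r(1)(s)\ldots}^{1p(m)(1)\ldots}$ multiplied by the six Jacobian factors of (\ref{tr-rules-d-tensors}) and by the untilded adapted tensor product. I then regroup each Jacobian factor with its corresponding basis element and use the transformation rules (\ref{tr-rules-v-a-b}) and (\ref{tr-rules-cv-a-b}) in the contracted form
\[
\tfrac{dt}{d\widetilde{t}}\tfrac{\delta}{\delta t}=\tfrac{\delta}{\delta\widetilde{t}},\qquad \tfrac{\partial x^{i}}{\partial\widetilde{x}^{p}}\tfrac{\delta}{\delta x^{i}}=\tfrac{\delta}{\delta\widetilde{x}^{p}},\qquad \tfrac{\partial x^{j}}{\partial\widetilde{x}^{m}}\tfrac{d\widetilde{t}}{dt}\tfrac{\partial}{\partial y_{1}^{j}}=\tfrac{\partial}{\partial\widetilde{y}_{1}^{m}},
\]
together with the three analogous identities for $dt$, $dx^{k}$ and $\delta y_{1}^{l}$. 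After performing all six of these contractions the expression collapses to
\[
\widetilde{D}_{1r(1)(s)\ldots}^{1p(m)(1)\ldots}\,\frac{\delta}{\delta\widetilde{t}}\otimes\frac{\delta}{\delta\widetilde{x}^{p}}\otimes\frac{\partial}{\partial\widetilde{y}_{1}^{m}}\otimes\cdots\otimes d\widetilde{t}\otimes d\widetilde{x}^{r}\otimes\delta\widetilde{y}_{1}^{s}\otimes\cdots,
\]
which is precisely the same construction carried out in the tilded chart. Hence $D$ is a well-defined global tensor field on $J^{1}(\mathbb{R},M)$.

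The only real ``difficulty'' here is bookkeeping: the d-tensor rule (\ref{tr-rules-d-tensors}) carries six Jacobian factors, one per index type, and I must pair each with the correct basis element. This pairing is however exactly what the rule (\ref{tr-rules-d-tensors}) was set up to record, so the corollary follows by matching — no analytic ingredient is required beyond the earlier proposition on adapted-basis transformations.
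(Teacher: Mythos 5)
Your proof is correct and follows exactly the paper's own route: both express the components in the adapted bases $\{\delta/\delta t,\,\delta/\delta x^{i},\,\partial/\partial y_{1}^{i}\}$ and $\{dt,\,dx^{k},\,\delta y_{1}^{l}\}$ and observe that the d-tensor rule (\ref{tr-rules-d-tensors}) cancels against the adapted-basis transformation laws (\ref{tr-rules-v-a-b})--(\ref{tr-rules-cv-a-b}). You merely carry out explicitly the index-by-index cancellation that the paper leaves implicit.
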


\begin{proof}
Using the adapted bases attached to a nonlinear connection $\Gamma $ and
taking into account the transformation rules (\ref{tr-rules-d-tensors}) of a
d-tensor, it follows that a d-tensor $D=\left(
D_{1k(1)(l)...}^{1i(j)(1)...}\right) $ can be regarded as a global
geometrical object (a classical tensor) on the 1-jet space $J^{1}(\mathbb{R}%
,M)$, by putting%
\begin{equation*}
D=D_{1k(1)(l)...}^{1i(j)(1)...}\frac{\delta }{\delta t}\otimes \dfrac{\delta 
}{\delta x^{i}}\otimes \dfrac{\partial }{\partial y_{1}^{j}}\otimes
dt\otimes dx^{k}\otimes \delta y_{1}^{l}\otimes ...\text{.}
\end{equation*}
\end{proof}

\begin{remark}
\label{dt-ab-nlc} The utilization of parentheses for certain indices of the
local components $D_{1k(1)(l)...}^{1i(j)(1)...}$ of the distinguished tensor 
$D$ on $J^{1}(\mathbb{R},M)$ is suitable for contractions. To illustrate
this fact, we give the following examples:\medskip

(i) The \textbf{fundamental metrical d-tensor} produced by a \textbf{%
relativistic time dependent Lagrangian function }(see Example \ref%
{fund-met-d-t}) produces the geometrical object%
\begin{equation*}
\mathbf{G}=G_{(i)(j)}^{(1)(1)}\delta y_{1}^{i}\otimes \delta y_{1}^{j};
\end{equation*}

(ii) The \textbf{canonical Liouville d-tensor field} of the 1-jet space $%
J^{1}(\mathbb{R},M)$ (see Example \ref{Liouville}) is represented by the
geometrical object%
\begin{equation*}
\mathbf{C}=\mathbf{C}_{(1)}^{(i)}\dfrac{\partial }{\partial y_{1}^{i}}%
=y_{1}^{i}\dfrac{\partial }{\partial y_{1}^{i}};
\end{equation*}

(iii) The $h$\textbf{-normalization d-tensor field} of the 1-jet space $%
J^{1}(\mathbb{R},M)$ (see Example \ref{normal}) has the representative object%
\begin{equation*}
\mathbf{J}_{h}=J_{(1)1j}^{(i)}\dfrac{\partial }{\partial y_{1}^{i}}\otimes
dt\otimes dx^{j}=h_{11}\dfrac{\partial }{\partial y_{1}^{i}}\otimes
dt\otimes dx^{i};
\end{equation*}

(iv) The $h$\textbf{-canonical Liouville d-tensor field} of the 1-jet space $%
J^{1}(\mathbb{R},M)$ (see Example \ref{h-Liouville}) is equivalent to the
geometrical object%
\begin{equation*}
\mathbf{L}_{h}=L_{(1)11}^{(i)}\dfrac{\partial }{\partial y_{1}^{i}}\otimes
dt\otimes dt=h_{11}y_{1}^{i}\dfrac{\partial }{\partial y_{1}^{i}}\otimes
dt\otimes dt=\mathbf{C}\otimes h.
\end{equation*}
\end{remark}

\section{Relativistic time dependent semisprays and jet nonlinear connections%
}

\hspace{5mm}In this Section we study the geometrical relations between 
\textit{relativistic time dependent semisprays} and \textit{nonlinear
connections} on the 1-jet space $J^{1}(\mathbb{R},M)$. In this direction, we
prove the following geometrical results:

\begin{proposition}
(i) The \textit{temporal semisprays} $H=\left( H_{(1)1}^{(j)}\right) $ and
the sets of \textit{temporal components of nonlinear connections }$\Gamma _{%
\text{temporal}}=\left( M_{(1)1}^{(j)}\right) $ are in one-to-one
correspondence on the 1-jet space $J^{1}(\mathbb{R},M)$, via:%
\begin{equation*}
M_{(1)1}^{(j)}=2H_{(1)1}^{(j)},\qquad H_{(1)1}^{(j)}=\frac{1}{2}%
M_{(1)1}^{(j)}.
\end{equation*}

(ii) The \textit{spatial semisprays} $G=\left( G_{(1)1}^{(j)}\right) $ and
the sets of \textit{spatial components of nonlinear connections }$\Gamma _{%
\text{spatial}}=\left( N_{(1)k}^{(j)}\right) $ are connected on the 1-jet
space $J^{1}(\mathbb{R},M)$, via the relations: 
\begin{equation*}
N_{(1)k}^{(j)}=\frac{\partial G_{(1)1}^{(j)}}{\partial y_{1}^{k}},\qquad
G_{(1)1}^{(j)}=\frac{1}{2}N_{(1)m}^{(j)}y_{1}^{m}.
\end{equation*}
\end{proposition}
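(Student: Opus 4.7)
My plan is to prove both parts by direct comparison of the transformation rules already established for semisprays and for nonlinear connection components, supplemented by straightforward chain-rule manipulations.

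For part (i), I would simply observe that the transformation rule (\ref{tr-rules-t-s}) for a temporal semispray and the transformation rule (\ref{tr-rules-t-nlc}) for the temporal components of a nonlinear connection are literally identical under the substitution $M_{(1)1}^{(k)}=2H_{(1)1}^{(k)}$: the left-hand side of (\ref{tr-rules-t-s}) already carries a factor of $2$ in front of $\widetilde{H}$, and its right-hand side carries the same factor in front of $H$, matching (\ref{tr-rules-t-nlc}) term by term. Hence the assignment $H\mapsto M:=2H$ is a bijection between sets of local functions satisfying the two rules, which by the equivalences proved in the respective propositions is precisely a bijection between temporal semisprays and sets of temporal components of nonlinear connections.

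For part (ii), the direction $G\mapsto N:=\partial G/\partial y_1$ is obtained by differentiating rule (\ref{tr-rules-s-s}) with respect to $\widetilde{y}_1^l$. Using the chain rule together with the identity $\partial y_1^m/\partial\widetilde{y}_1^l=(\partial x^m/\partial\widetilde{x}^l)(d\widetilde{t}/dt)$, which follows from (\ref{rgg}), the first term reproduces exactly the first term of (\ref{tr-rules-s-nlc}) applied to $N=\partial G/\partial y_1$; the second term, after unfolding $\partial\widetilde{y}_1^k/\partial x^i=(\partial^2\widetilde{x}^k/\partial x^i\partial x^m)(dt/d\widetilde{t})y_1^m$ and exploiting the symmetry of the resulting quadratic expression in $\widetilde{y}_1$, yields exactly the inhomogeneous term $-(\partial x^i/\partial\widetilde{x}^l)(\partial\widetilde{y}_1^k/\partial x^i)$ appearing in (\ref{tr-rules-s-nlc}). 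The converse direction $N\mapsto G:=\tfrac{1}{2}N_{(1)m}^{(j)}y_1^m$ is obtained by contracting (\ref{tr-rules-s-nlc}) with $\widetilde{y}_1^l$ and dividing by $2$; the key simplification is the Euler-type identity $(\partial x^i/\partial\widetilde{x}^l)\widetilde{y}_1^l=(dt/d\widetilde{t})y_1^i$, immediate from (\ref{rgg}), which when applied to each term of the contraction recovers rule (\ref{tr-rules-s-s}) for the spatial semispray $\tfrac{1}{2}N_{(1)m}^{(j)}y_1^m$.

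The main obstacle I anticipate is purely the bookkeeping in part (ii): one must consistently distinguish partial derivatives taken with respect to the original versus the transformed fibre coordinates, and must remember that $\widetilde{y}_1^k$ depends on all of $(t,x^j,y_1^j)$, so expressions such as $\partial\widetilde{y}_1^k/\partial x^i$ are themselves linear in $y_1$. The factor of $2$ distinguishing $G$ from $N$ (absent in the temporal case) arises naturally from the symmetrization that occurs when differentiating the quadratic-in-$\widetilde{y}_1$ inhomogeneous term of the spatial semispray rule; once this combinatorial point is identified, the verification reduces to matching coefficients on both sides of each transformation law.
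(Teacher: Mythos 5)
Your proposal is correct and follows exactly the route the paper takes: the paper's proof consists of the single remark that the proposition is an immediate consequence of the transformation laws (\ref{tr-rules-t-s}), (\ref{tr-rules-t-nlc}), (\ref{tr-rules-s-s}), (\ref{tr-rules-s-nlc}) and (\ref{rgg}), and your write-up simply supplies the omitted chain-rule and contraction details, all of which check out (including the factor of $2$ coming from symmetrizing the quadratic inhomogeneous term). No gaps; this is the same argument, carried out explicitly.
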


\begin{proof}
The Proposition is an immediate consequence of the local transformation laws
(\ref{tr-rules-t-s}) and (\ref{tr-rules-t-nlc}), respectively (\ref%
{tr-rules-s-s}), (\ref{tr-rules-s-nlc}) and (\ref{rgg}).
\end{proof}

\begin{definition}
The nonlinear connection $\Gamma _{\mathcal{S}}$ on the 1-jet space $J^{1}(%
\mathbb{R},M)$, whose components are%
\begin{equation}
\Gamma _{\mathcal{S}}=\left( M_{(1)1}^{(j)}=2H_{(1)1}^{(j)},\text{ }%
N_{(1)k}^{(j)}=\frac{\partial G_{(1)1}^{(j)}}{\partial y_{1}^{k}}\right) ,
\label{nlc-prod-by-spray}
\end{equation}%
is called the \textbf{canonical jet nonlinear connection produced by the
relativistic time dependent semispray}%
\begin{equation*}
\mathcal{S}=\left( H,G\right) =\left( H_{(1)1}^{(j)},G_{(1)1}^{(j)}\right) .
\end{equation*}
\end{definition}

\begin{definition}
The relativistic time dependent semispray $\mathcal{S}_{\Gamma }$ on the
1-jet vector bundle $J^{1}(\mathbb{R},M)$, whose components are 
\begin{equation}
\mathcal{S}_{\Gamma }=\left( H_{(1)1}^{(j)}=\frac{1}{2}M_{(1)1}^{(j)},\text{ 
}G_{(1)1}^{(j)}=\frac{1}{2}N_{(1)m}^{(j)}y_{1}^{m}\right) ,
\label{semispray-prod-by-nlc}
\end{equation}%
is called the \textbf{canonical relativistic time dependent semispray
produced by the jet nonlinear connection}%
\begin{equation*}
\Gamma =\left( M_{(1)1}^{(j)},N_{(1)k}^{(j)}\right) .
\end{equation*}
\end{definition}

\begin{remark}
The canonical jet nonlinear connection (\ref{nlc-prod-by-spray}) produced by
the re\-la\-ti\-vis\-tic time dependent semispray $\mathcal{S}$ is a natural
generalization of the canonical nonlinear connection $N$ induced by a
time-dependent semispray $G$ in the \textit{classical non-a\-u\-to\-no\-mous
Lagrangian geometry} [6].
\end{remark}

\begin{remark}
The formulas (\ref{semispray-prod-by-nlc}) can offer us other interesting
examples of jet relativistic time dependent semisprays.
\end{remark}

It is obvious that the equations (\ref{harm-curve-eq}) of the harmonic
curves of the canonical relativistic time dependent semispray (\ref%
{semispray-prod-by-nlc}) produced by the jet nonlinear connection%
\begin{equation*}
\Gamma =\left( M_{(1)1}^{(j)},N_{(1)k}^{(j)}\right)
\end{equation*}%
have the form%
\begin{equation}
\frac{d^{2}x^{j}}{dt^{2}}+M_{(1)1}^{(j)}\left( t,x^{k}(t),\frac{dx^{k}}{dt}%
\right) +N_{(1)m}^{(j)}\left( t,x^{k}(t),\frac{dx^{k}}{dt}\right) \frac{%
dx^{m}}{dt}=0.  \label{harm-curv-autoparal-nlc-eq}
\end{equation}

\begin{definition}
The smooth curves $c(t)=(x^{i}(t))$ which are solutions for the equations (%
\ref{harm-curv-autoparal-nlc-eq}) are called the \textbf{autoparallel
harmonic curves of the jet nonlinear connection }$\Gamma $.
\end{definition}

\begin{remark}
The geometrical concept of\textbf{\ autoparallel harmonic curve} of a jet
nonlinear connection $\Gamma $ naturally generalizes the concept of \textbf{%
path} of a time-dependent nonlinear connection $N$ from the \textit{%
classical non-a\-u\-to\-no\-mous Lagrangian geometry} [6] or that of \textbf{%
autoparallel curve} of a nonlinear connection $N$ from the \textit{classical
a\-u\-to\-no\-mous (time independent) Lagrangian geometry} [3].
\end{remark}

\begin{example}
The autoparallel harmonic curves of the particular jet nonlinear connection
(see Example \ref{cannlc}) 
\begin{equation*}
\mathbf{\ }\mathring{\Gamma}=\left( \mathring{M}_{(1)1}^{(j)},\mathring{N}%
_{(1)i}^{(j)}\right) \mathbf{\ }
\end{equation*}%
attached to the pair of metrics $(h_{11}(t),\varphi _{ij}(x))$ are exactly
the affine maps between the manifolds $(\mathbb{R},h_{11}(t))$ and $%
(M,\varphi _{ij}(x))$.
\end{example}

\section{Conclusion}

\hspace{5mm}At the end of this paper we would like to point out that the jet
relativistic geometrical objects (d-tensors, semisprays, nonlinear
connections) constructed in this paper represent the main ingredients for
the development of a relativistic rheonomic Lagrangian theory of
gravitational field (the work in progress and the final purpose of this
paper). This gravitational theory is provided only by a given time dependent
Lagrangian $\mathcal{L}=L\sqrt{h_{11}(t)}$, where $L:J^{1}(\mathbb{R}%
,M)\rightarrow \mathbb{R}$ is a non-degenerate Lagrangian function. In other
words, the matrix $g=(g_{ij})$, where%
\begin{equation*}
g_{ij}(t,x,y)=\frac{h_{11}(t)}{2}\frac{\partial ^{2}L}{\partial
y_{1}^{i}\partial y_{1}^{j}},
\end{equation*}%
is invertible, having the inverse $g^{-1}=(g^{kl})$. A such geometrical and
physical theory has in the central role the Euler-Lagrange equations [7], [8]%
\begin{equation*}
\frac{d^{2}x^{i}}{dt^{2}}+2H_{(1)1}^{(i)}\left( t,x^{k},y_{1}^{k}\right)
+2G_{(1)1}^{(i)}\left( t,x^{k},y_{1}^{k}\right) =0,
\end{equation*}%
where the geometrical objects (we use notations already given)%
\begin{equation*}
H_{(1)1}^{(i)}=-\frac{1}{2}H_{11}^{1}(t)y_{1}^{i},
\end{equation*}%
respectively%
\begin{equation*}
G_{(1)1}^{(i)}=\frac{h_{11}g^{ik}}{4}\left[ \frac{\partial ^{2}L}{\partial
x^{j}\partial y_{1}^{k}}y_{1}^{j}-\frac{\partial L}{\partial x^{k}}+\frac{%
\partial ^{2}L}{\partial t\partial y_{1}^{k}}+\frac{\partial L}{\partial
x^{k}}H_{11}^{1}(t)+2h^{11}H_{11}^{1}g_{kl}y_{1}^{l}\right] ,
\end{equation*}%
represent a temporal semispray, respectively spatial semispray. These
geometrical objects produce, via the formulas (\ref{nlc-prod-by-spray}), a
canonical nonlinear connection%
\begin{equation*}
\Gamma _{\mathcal{L}}=\left( M_{(1)1}^{(j)}=2H_{(1)1}^{(j)},\text{ }%
N_{(1)k}^{(j)}=\frac{\partial G_{(1)1}^{(j)}}{\partial y_{1}^{k}}\right)
\end{equation*}%
which is necessary for the construction of the gravitational potential [8]%
\begin{equation*}
G_{\mathcal{L}}=h_{11}dt\otimes dt+g_{ij}dx^{i}\otimes
dx^{j}+h^{11}(t)g_{ij}(t,x,y)\delta y_{1}^{i}\otimes \delta y_{1}^{j},
\end{equation*}%
where%
\begin{equation*}
\delta y_{1}^{i}=dy_{1}^{i}+M_{(1)1}^{(i)}dt+N_{(1)j}^{(i)}dx^{j}.
\end{equation*}

Consequently, the gravitational potential $G_{\mathcal{L}}$ is provided only
by the relativistic rheonomic Lagrangian\textit{\ }$\mathcal{L}=L\sqrt{%
h_{11}(t)}$, having in this way an \textit{intrinsic geometrical character}.
Moreover, the gravitational potential $G_{\mathcal{L}}$ is governed by some
natural \textit{generalized Einstein equations} which are exposed in [8] and
which generalize the already classical Einstein equations from the theory
proposed by Miron and Anastasiei in [6]. \medskip

\textbf{Acknowledgements.} The present research was supported by Contract
with Sinoptix No. 8441/2009.

\textbf{Author's address:\medskip}

Mircea N{\scriptsize EAGU}

University Transilvania of Bra\c{s}ov, Faculty of Mathematics and Informatics

Department of Algebra, Geometry and Differential Equations

B-dul Eroilor 29, RO-500036 Bra\c{s}ov, Romania.

E-mail: mircea.neagu@unitbv.ro

Website: http://www.2collab.com/user:mirceaneagu

\end{document}